\newtheorem*{rep@theorem}{\rep@title}
\newcommand{\newreptheorem}[2]{%
\newenvironment{rep#1}[1]{%
 \def\rep@title{#2 \ref{##1}}%
 \begin{rep@theorem}}%
 {\end{rep@theorem}}}
\newcommand{\C}[1]{{\mathcal #1}}
\newtheorem{theorem}{Theorem}[section]
\newtheorem{lemma}[theorem]{Lemma}
\newtheorem{conjecture}[theorem]{Conjecture}
\theoremstyle{definition}\newtheorem{definition}[theorem]{Definition}
\theoremstyle{remark}\newtheorem{remark}[theorem]{Remark}
\theoremstyle{definition}
\makeatletter\@addtoreset{case}{example}\makeatother
\theoremstyle{definition}\newtheorem{claim}{Claim}
\begin{document}

\title{Elementary bounded generation for ${\rm SL}_n$ for global function fields and $n\geq 3$}

\author{Alexander A. Trost}
\address{Fakult\"{a}t f\"{u}r Mathematik, Ruhr Universit\"{a}t Bochum, D-44780 Bochum, Germany}
\email{Alexander.Trost@ruhr-uni-bochum.de}

\begin{abstract}
This paper shows that the group ${\rm SL}_n(R)$ is boundedly elementary generated for $n\geq 3$ and $R$ the ring of algebraic integers in a global function field. Contrary to previous statements for number fields and earlier statements for global function fields, the bounds proven in this preprint for elementary bounded generation are independent of the underlying global function field and only depend on the integer $n.$ Combining our main result with earlier results, we further establish that elementary bounded generation always has bounds independent from the global field in question, only depending on $n.$
\end{abstract}

\maketitle

\section{Introduction} 
\label{intro}

Classically, a group $G$ is called \textit{boundedly generated} if there are finitely many cyclic groups $Z_1,\dots,Z_L$ such that $G=Z_1\cdots Z_L.$ This property was introduced, among else, as a possible unified approach to proving the Congruence Subgroup Property of Arithmetic Groups as stated in the (now disproven) Rapinchuk-Conjecture \cite[Question~A]{corvaja2021nonvirtually} and has applications concerning the representation theory of the group in question. There are papers proving this property for various classical, arithmetic groups like ${\rm SL}_n(\mathbb{Z})$ in a paper by Carter and Keller \cite{MR704220} or for more general arithmetic (twisted) Chevalley groups by Tavgen \cite{MR1044049}. For ${\rm SL}_2(R)$ for $R$ a ring of algebraic integers with infinitely many units there are papers by Carter, Keller and Paige written up by Morris \cite{MR2357719} and another one by Rapinchuk, Morgan and Sury \cite{MR3892969}. More recently, there was also a result for certain isotropic orthogonal groups over number fields \cite{MR2228948}. In case of Chevalley groups like ${\rm SL}_n$ or ${\rm Sp}_{2n}$, bounded generation results are usually proven by showing first that each element of the group can be written as some bounded product of elementary matrices (or the corresponding generalization in the group, so called \textit{root elements}) and then analyzing the subgroups given by root elements more closely. The first property is commonly referred to as \textit{bounded generation by root elements} or \textit{bounded elementary generation}. For linear groups in positive characteristic classical bounded generation is only possible, if the group is virtually abelian \cite[Theorem~1]{MR2022116}. That being said, the weaker property of bounded generation by root elements still holds in positive characteristic for Chevalley groups defined over global function fields:

\begin{theorem}\cite[Theorem~1.3]{Chevalley_positive_char_tentative}
Let $K$ be a global function field that is a finite extension of $\mathbb{F}_q(T)$ with ring of integers $\C O_K$ and field of constants $\mathbb{F}_q.$ Further, let $\Phi$ be an irreducible root system of rank at least $2.$ Then the simply-connected, split Chevalley group $G(\Phi,\C O_K)$ is boundedly generated by root elements. More precisely, there is a constant $L(q,k)\in\mathbb{N}$ only depending on $q$ and $k:=[K:\mathbb{F}_q(T)]$ such that each element of $G(\Phi,\C O_K)$ can be written as a product of $L(q,k)\cdot{\rm rank}(\Phi)$ many root elements.  
\end{theorem}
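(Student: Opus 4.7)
The plan is to prove this via the standard two-step template for bounded elementary generation in Chevalley groups over arithmetic rings: first reduce to rank $2$ via a Tavgen-style argument, then handle the three rank-$2$ irreducible root systems directly.

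For the reduction step, one invokes Tavgen's rank-reduction theorem, which reduces bounded elementary generation of $G(\Phi,R)$ for $\Phi$ irreducible of rank $\geq 3$ to that of its rank-$2$ sub-Chevalley groups $G(\Psi,R)$. Iterating the reduction along a chain of sub-root systems terminating in rank-$2$ subsystems and combining the resulting factorizations produces a bound for $G(\Phi,\C O_K)$ that grows linearly in ${\rm rank}(\Phi)$, provided the rank-$2$ cases admit a uniform bound. This is precisely the source of the ${\rm rank}(\Phi)$ factor in the statement of the theorem.

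For the base case $\Phi\in\{A_2,B_2,G_2\}$ I would produce an explicit algorithm, of Gauss-decomposition type, that writes any $g\in G(\Phi,\C O_K)$ as $g=u_-tu_+$ and then decomposes each factor into a bounded product of root elements. Clearing a row or a column by elementary operations requires adjusting residues of matrix entries modulo an appropriate ideal, and here the key input is strong approximation in the global function field $K$, combined with effective Riemann--Roch control on representatives of residue classes in $\C O_K$. Since the genus of $K$ and the sizes of the residue fields $\C O_K/\F{p}$ are both controlled solely by $q$ and $k$, each clearing step costs only boundedly many root elements, with an explicit bound depending only on $q$ and $k$.

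The principal obstacle is not the Tavgen reduction but the rank-$2$ base case. Unlike in the number-field setting treated by Carter--Keller and its extensions, the unit group $\C O_K^\times=\mathbb{F}_q^\times$ is finite and small, so one cannot directly call on a rich group of units to normalize pivot entries. The remedy is to manufacture substitute ``units'' inside $G(\Phi,\C O_K)$ via short commutator expressions $[x_\alpha(r),x_\beta(s)]$ of root elements, or to enlarge $\C O_K$ to a suitable $S$-integer ring in which one has more units available, run the argument there, and descend. Either route produces a uniform cost depending only on $q$ and $k$, which is where the constant $L(q,k)$ in the stated bound comes from; removing this last dependence on $q$ and $k$ is, of course, exactly what the present paper sets out to do in the $A_n$-case.
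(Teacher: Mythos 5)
First, a point of order: this theorem is not proved in the present paper at all --- it is quoted verbatim from an external reference (the author's own \cite{Chevalley_positive_char_tentative}) as background, so there is no in-paper proof to compare against. What can be said is that the cited result is obtained via the Carter--Keller--Paige framework as written up by Morris \cite{MR2357719} (the paper says as much in its final section), which is a model-theoretic compactness argument over first-order axiomatizable classes of rings; that is precisely why the constant $L(q,k)$ there is non-explicit and depends on $q$ and $k$. Your Tavgen-style rank reduction is fine and standard, and it is indeed the correct explanation of the linear factor ${\rm rank}(\Phi)$ in the bound.

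The genuine gap is in your rank-$2$ base case, which is where all of the arithmetic difficulty lives. A Gauss decomposition $g=u_-tu_+$ simply does not exist for general elements of $G(\Phi,\C O_K)$: the ring $\C O_K$ is a Dedekind domain, not semilocal, and one cannot arrange the relevant pivot to be a unit. Strong approximation together with Riemann--Roch control only buys you the ``stability'' reduction --- compressing a matrix into a small (say $2\times 2$) block at a cost quadratic in the rank --- and that step is already carried out in Section 3 of this paper by quoting Carter--Keller. The hard part is finishing off that small block with a \emph{bounded} number of root elements, and for this no amount of approximation suffices: one needs genuinely global reciprocity input. Compare the proofs of Lemmas \ref{tech_lemma_3} and \ref{tech_lemma_4} in this paper, which use Dirichlet's theorem on primes in arithmetic progressions, the $m$-th power reciprocity law, ray class fields, Artin reciprocity and Chebotarev density to control a power-residue (Mennicke-type) symbol; the alternative route in \cite{Chevalley_positive_char_tentative} replaces this by the Carter--Keller--Paige compactness machinery. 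Your two proposed remedies do not close this gap: commutators $[x_\alpha(r),x_\beta(s)]$ of root elements are unipotent and cannot manufacture the missing semisimple ``units,'' and passing to an $S$-integer ring with more units and ``descending'' fails because the factorization obtained upstairs involves denominators that do not lie in $\C O_K$. As written, the proposal proves the reduction but not the theorem.
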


There are also some earlier results: The earliest known to me is the result \cite[Theorem~2]{MR379441} by Clifford and Queen. More recently, Nica has presented the following result:

\begin{theorem}\cite[Theorem~2]{Nica}\label{Nica_thm}
Let $\mathbb{F}$ be a finite field and $n\geq 3.$ Then ${\rm SL}_n(\mathbb{F}[T])$ is boundedly generated by root elements.
\end{theorem}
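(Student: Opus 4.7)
The plan is to reduce the problem to $SL_n(\mathbb{F})$, which is finite and hence trivially boundedly generated by any fixed finite generating set. Concretely, I would show that any $g \in SL_n(\mathbb{F}[T])$ can be transformed, by a bounded (in terms of $n$ and $\mathbb{F}$) number of left and right multiplications by elementary matrices, into an element of $SL_n(\mathbb{F}) \subset SL_n(\mathbb{F}[T])$. The natural approach is column reduction plus induction on $n$: reduce the first column of $g$ to $e_1$ in a bounded number of elementary row operations, use the pivot to clear the first row in $n-1$ further operations, and recurse on the $(n-1) \times (n-1)$ block, with base case $n = 3$.

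The heart of the argument is thus the bounded column reduction: given a unimodular vector $v = (v_1, \dots, v_n)^T \in \mathbb{F}[T]^n$ with $n \geq 3$, reduce it to $e_1$ with $O_n(1)$ elementary operations. I would first use that $\mathbb{F}[T]$, as a Dedekind domain, has stable rank at most $2$, and iteratively add $\mathbb{F}[T]$-multiples of $v_3, \dots, v_n$ into $v_1, v_2$ to arrange $\gcd(v_1, v_2) = 1$ in $O(n)$ steps. A Bezout identity $u v_1 + w v_2 = 1$ then yields a matrix $M = \left(\begin{smallmatrix} u & w \\ -v_2 & v_1 \end{smallmatrix}\right) \in SL_2(\mathbb{F}[T])$ which, applied to the top two rows, sends $v$ to $(1, 0, v_3, \dots, v_n)^T$; the trailing entries are then killed by the pivot in $n-2$ further steps. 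Thus the whole problem reduces to expressing any $M \in SL_2(\mathbb{F}[T])$, embedded in the top-left $2 \times 2$ block of $SL_n(\mathbb{F}[T])$, as a bounded product of $SL_n$-elementary matrices.

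This last point is the main obstacle, and is precisely the step where $n \geq 3$ is essential: $SL_2(\mathbb{F}[T])$ itself is emphatically not boundedly elementarily generated, since its Nagao amalgam structure $SL_2(\mathbb{F}) \ast_{B(\mathbb{F})} B(\mathbb{F}[T])$ forces unboundedly many Euclidean reductions. The intended resolution would use Suslin--Vaserstein style commutator identities, which trade two $SL_2$-elementary operations in the top-left block for a bounded-length word in $SL_n$-elementary matrices that exploits the third (and further) rows as auxiliary memory. Controlling the degree growth through these identities, and thereby showing the final bound depends only on $n$ and $\mathbb{F}$, is the chief technical work. I expect arithmetic input of a Carter--Keller flavour to be necessary here: in the number field case one uses Dirichlet's theorem on primes in arithmetic progressions, and the natural replacement over $\mathbb{F}[T]$ is Kornblum's analog producing irreducible polynomials in prescribed arithmetic progressions modulo any $f \in \mathbb{F}[T]$, together with effective control over the unit groups of the finite rings $\mathbb{F}[T]/(f)$.
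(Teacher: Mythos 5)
Your reduction steps are fine and agree with how every proof of this statement begins: the stable-range/column-reduction argument brings an arbitrary element of ${\rm SL}_n(\mathbb{F}[T])$, $n\geq 3$, to the form ${\rm diag}(M,I_{n-2})$ with $M\in{\rm SL}_2(\mathbb{F}[T])$ using $O(n^2)$ elementary matrices, and you correctly identify that everything then hinges on factoring this residual block. But the proposal stops exactly where the theorem starts. The mechanism you offer for the last step --- ``Suslin--Vaserstein style commutator identities'' that trade ${\rm SL}_2$-operations for bounded words in ${\rm SL}_n$ using the extra rows --- cannot work on its own, and there is a hard obstruction showing this: such identities are valid over every commutative ring, yet van der Kallen proved that ${\rm SL}_3(\mathbb{C}[x])$ does \emph{not} have bounded word length with respect to elementary matrices, even though $\mathbb{C}[x]$ is Euclidean and the same column reduction applies verbatim. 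So the finiteness of $\mathbb{F}$ must enter in an essential, quantitative way, and no ring-independent identity manipulation can substitute for it. You do gesture at the right ingredients (Kornblum's analogue of Dirichlet, the unit groups of $\mathbb{F}[T]/(f)$), but ``arithmetic input of a Carter--Keller flavour is expected to be necessary'' is a placeholder for the entire content of the result.

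For comparison, what actually closes the gap --- in Nica's proof of this statement and, in greater generality, in Lemmas \ref{tech_lemma_3} and \ref{tech_lemma_4} of the present paper --- is a Mennicke-symbol computation with explicit arithmetic witnesses: first bring the top row of $M$ to the form $(a^m,b)$ with $b$ generating a prime ideal and $m=|\mathbb{F}|-1$, which over $\mathbb{F}[T]$ rests on the $m$-th power reciprocity law and Dirichlet's theorem for polynomials; then produce $c$ with $bc\equiv -1 \bmod a$ and ${\rm gcd}(\epsilon(b),\epsilon(c))=m$, where $\epsilon(\cdot)$ is the exponent of the unit group of the quotient ring --- over $\mathbb{F}_q[T]$ this exploits ${\rm gcd}(q^{d_1}-1,q^{d_2}-1)=q^{{\rm gcd}(d_1,d_2)}-1$ and a Chebotarev/Dirichlet choice of irreducibles of suitable degrees. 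Carter--Keller's Lemmas 1 and 2 then convert ${\rm diag}(M,1)$ into an explicit $m$-th power and that power into the identity, each with a fixed number of elementary matrices. Without this arithmetic core your argument proves nothing beyond ${\rm SL}_n={\rm E}_n$, which is already known for any Euclidean domain and does not imply boundedness.
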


However, most such results concerning bounded generation by root elements depend on the global function field in question: Cliffords requires rather artificial conditions on the class number of the global function field and the degrees of certain primes and Nica's result crucially uses the fact that $\mathbb{F}[T]$ is a principal ideal domain. The main goal of this preprint is to generalize such results to all rings of integers in global function fields in a way that avoids the dependencies on the function field in question: 

\begin{theorem}\label{main_thm}
Let $K$ be a global function field with ring of integers $\C O_K$ and $n\geq 3$. Then ${\rm SL}_n(\C O_K)$ is boundedly generated by elementary matrices. More precisely, each element of ${\rm SL}_n(\C O_K)$ can be written as a product of at most $1/2(3n^2-n)+50$ elementary matrices.
\end{theorem}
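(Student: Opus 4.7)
My plan is to proceed by induction on $n$, with base case $n = 3$. The key numerical observation is that
\[
\tfrac{1}{2}(3n^2-n) - \tfrac{1}{2}\bigl(3(n-1)^2-(n-1)\bigr) = 3n-2,
\]
so the induction step for $n \geq 4$ should reduce any $g \in {\rm SL}_n(\C O_K)$ to a block matrix of the form $\operatorname{diag}(g', 1)$ with $g' \in {\rm SL}_{n-1}(\C O_K)$ at a cost of at most $3n-2$ elementary matrices, while the base case $n = 3$ must express each element of ${\rm SL}_3(\C O_K)$ as a product of at most $62$ elementary matrices. Telescoping the induction from $n=3$ up to $n$ then yields the advertised bound $\tfrac{1}{2}(3n^2-n)+50$.

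For the induction step with $n \geq 4$, the approach is classical column/row reduction. Given $g \in {\rm SL}_n(\C O_K)$ with first column the unimodular vector $(a_1,\ldots,a_n)^T$, I would first use a small number of elementary row operations on rows $2,\ldots,n$ to arrange that the tail $(a_2,\ldots,a_n)$ is itself unimodular in $\C O_K$; this is available because Dedekind domains have stable rank at most $2$. Once the tail is unimodular, $1 - a_1 \in \langle a_2,\ldots,a_n\rangle = \C O_K$, so $n-1$ row operations of type $E_{1i}(\lambda_i)$ bring the top-left entry to exactly $1$. Then $n-1$ further row operations clear the first column below, and $n-1$ column operations clear the first row, producing a block matrix of the desired form. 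Careful amortization of the preparatory step yields the exact count $3n-2$.

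The base case $n = 3$ is the heart of the argument. Nica's Theorem \ref{Nica_thm} treats $\C O_K = \mathbb F[T]$, but that proof crucially uses the fact that $\mathbb F[T]$ is a principal ideal domain, a property that fails for general $\C O_K$. My plan would be to use strong approximation in $\C O_K$ to preprocess $g \in {\rm SL}_3(\C O_K)$ into a matrix whose first column can be brought, via a controlled number of elementary operations, into a vector whose entries lie in a distinguished polynomial subring $\mathbb F_q[T] \subset \C O_K$; a Nica-style argument, supplemented by the $\mathrm{SL}_2$-techniques in the background of \cite{MR3892969,Chevalley_positive_char_tentative}, should then finish the job inside this subring. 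The numerical constant $50$ would emerge from explicit bookkeeping of the preprocessing plus the finishing steps.

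The main obstacle by far is uniformity in $\C O_K$: every earlier approach to bounded elementary generation over rings of integers in global function fields has hidden dependencies on field invariants such as the class number, the degree $[K:\mathbb F_q(T)]$, or the constant field size $q$, typically because those arguments recurse over primes or over divisor classes. The delicate point is to replace every such parameter-dependent loop with a \emph{single} application of strong approximation, so that neither the per-step count $3n-2$ in the induction nor the base-case constant $62$ ever acquires a dependence on the specific $\C O_K$.
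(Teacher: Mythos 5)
Your reduction to $n=3$ via stable range is sound and numerically consistent with the paper, which uses the Carter--Keller stability argument to reach a $2\times 2$ block at a cost of $\tfrac{1}{2}(3n^2-n-10)$ elementary matrices (equivalent to your telescoped $\sum(3k-2)$ count plus a base cost of $62$). The gap is entirely in your base case. Your plan --- use strong approximation to push the first column of $g\in{\rm SL}_3(\C O_K)$ into a polynomial subring $\mathbb F_q[T]$ and then run a Nica-style PID argument there --- does not work as described: a unimodular column over $\C O_K$ cannot in general be moved into $\mathbb F_q[T]$ by boundedly many elementary operations (the obstruction is precisely the nontrivial class group you are trying to route around), and even if it could, the remaining entries of the matrix still live in $\C O_K$, whereas Nica's argument takes place entirely inside ${\rm SL}_3(\mathbb F[T])$. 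More fundamentally, the actual difficulty in the base case is killing the Mennicke symbol of the first row uniformly in $K$, and strong approximation by itself gives no handle on that.

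What the paper actually does for the base case is run the Carter--Keller machine with two function-field-specific technical lemmas. First (Lemma~\ref{tech_lemma_3}), using the $m$-th power reciprocity law with $m=|\mathbb F|-1$ together with Dirichlet's theorem on primes in arithmetic progressions, three elementary operations bring the first row to the form $(a^m,b)$ with $b\C O_K$ a prime ideal. Second (Lemma~\ref{tech_lemma_4}), using ray class fields, Chebotarev density, and the key fact that constant field extensions are unramified, one produces $c$ with $bc\equiv -1 \bmod a\C O_K$ and $\gcd(\epsilon(b),\epsilon(c))=m$; this is exactly the arithmetic input that lets Carter--Keller's Lemmas~1 and~2 finish in $16+36$ further elementary matrices. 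The uniformity in $K$ that you correctly single out as the crux is obtained not from a single application of strong approximation but from the fact that all the auxiliary extensions involved are constant field extensions, hence automatically unramified and mutually compatible. Your proposal identifies the right difficulty but does not supply the mechanism that resolves it.
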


\begin{remark}
For the sake of completeness, we should mention that Carter and Keller claim in \cite[Remarks]{MR704220} that their (at the time unpublished) manuscript \cite{carter1985congruence} also contains a result for rings of integers in global function fields. The publication \cite{carter1985congruence} does not seem to be available though and the problem of bounded elementary generation in this case is still considered to be open in much later publications like \cite[P.~404]{Nica}, \cite[P.~523]{stepanov2016structure} and \cite[9.~Final Remarks]{https://doi.org/10.48550/arxiv.2204.10951}.
\end{remark}

Similar to Nica's Theorem~\ref{Nica_thm}, Theorem~\ref{main_thm} arises from an adaption of the method of Carter-Keller in \cite{MR704220} to the global function field setup. The resulting bounds in Theorem~\ref{main_thm} do not depend on the global function field in question however. The underlying reason for this difference between global function fields and number fields seems to be that for a number field $K$ the Carter-Keller argument required handling the ramified primes of the field extension $K|\mathbb{Q}$ one at a time in order to ensure that various different cyclotomic field extensions of $K$ appearing in the proof are in a certain sense compatible with one another by controlling their ramification behavior. In contrast, for global function fields, one is not concerned with cyclotomic but rather constant field extensions of the function field in question. These constant field extensions (if chosen correctly) are almost automatically compatible in the required sense as they are always unramified. The structure of the paper is straightforward: In Section 2, we introduce the necessary background on global function fields, their constant field extensions and class field theory and in Section 3, we prove Theorem~\ref{main_thm}. In the last section, we sketch how these arguments can be used to answer questions from \cite{MR3892969} and \cite{https://doi.org/10.48550/arxiv.2204.10951} concerning elementary bounded generation and pose a conjecture.

\section*{Acknowledgments}

The first part of the research for this preprint was done during a research visit at the Mathematische Forschungsinstitut Oberwolfach and the second part during a visit at the Otto-von-Guerricke University Magdeburg. I am very grateful for the support I received at these places.

\section{Basic definitions and notions}
\label{sec_basic_notions}

\subsection{Global function fields and their integers}

In this subsection, we introduce global function fields, their rings of integers and their primes. For a good introduction to the topic, we refer the reader to Rosen's book \cite{MR1876657}. We begin with the definition of global function fields:

\begin{definition}
A \textit{global function field over $\mathbb{F}_q$} is a finite field extension $K$ of the rational function field $\mathbb{F}_q(T)$ for $\mathbb{F}_q$ a finite field with $q$ elements. Then the ring of integers $\C O_K$ of $K$ is defined as the integral closure of $\mathbb{F}_q[T]$ in $K.$
\end{definition}

\begin{remark}
\hfill
\begin{enumerate}
\item{Equivalently, one can define global function fields as the field of functions $\mathbb{F}_q(C)$ of a nonsingular, geometrically integral, affine curve $C$ defined over the finite field $\mathbb{F}_q.$}
\item{The principal example of a global function over $\mathbb{F}_q$ is the field of rational functions $\mathbb{F}_q(T).$ The associated curve is the affine line $\mathbb{A}^1$ of the field $\mathbb{F}_q.$}
\end{enumerate}
\end{remark}

In the rest of the paper, we will always replace the field $\mathbb{F}_q$ by its algebraic closure $\mathbb{F}$ in $K$ and consequently set $q:=|\mathbb{F}|$. We leave it as an exercise to check that $\mathbb{F}$ is a finite field extension of $\mathbb{F}_q$. Assuming that $\mathbb{F}$ is algebraically closed in $K$, we call $\mathbb{F}$ the \textit{field of constants of $K$}. Later, we will also need the concept of primes in global function fields, not to be confused with the prime ideals of the corresponding ring of integers:
 
\begin{definition}
Let $K$ be a global function field with field of constants $\mathbb{F}$. 
\begin{enumerate}
\item{Then a \textit{prime of $K$} is a discrete valuation domain $R_{\C P}$ with fraction field $K$ such that $R_{\C P}$ contains $\mathbb{F}$ and has maximal ideal $\C P.$}
\item{We define the function
\begin{equation*}
{\rm ord}_{\C P}:K^*\to\mathbb{Z}
\end{equation*}  
as follows: If $a\in R_{\C P}-\{0\}$ is given, then ${\rm ord}_{\C P}(a)\in\mathbb{Z}$ is defined as the maximal $n\in\mathbb{N}_0$ such that $a\in\C P^n.$ If $a\in K^*$ is not an element of $R,$ then ${\rm ord}_{\C P}(a)$ is defined as $-{\rm ord}_{\C P}(a^{-1}).$}
\end{enumerate}
\end{definition}

\begin{remark}
Especially later on, we will often refer to `the prime $\C P$ of $K$' rather than `to the prime $R_{\C P}$ of $K$.'
\end{remark}

Furthermore, for a finite extension $K\mid L$ of global function fields and $R_{\C P}$ a prime of $K,$ the intersection $S_p:=R_{\C P}\cap L$ is a prime of the global function field $L.$ In this case, we refer to the prime $\C P$ of $K$ as covering (or lying over) the prime $p$ of $L.$ Let $x_p\in S_p$ be the generator of the maximal ideal $p$ in $S_p.$ We then define $e(\C P,p):={\rm ord}_{\C P}(x_p)$. The integer $e(\C P,p)$ does not depend on the choice of $x_p$ and is called the \textit{ramification index of $\C P$ over $p.$} Furthermore, the integer $f(\C P\mid p):=[R_{\C P}/\C P:S_p/p]$ is called \textit{the relative degree of $\C P$ over $p.$} 

As mentioned already, the field $\mathbb{F}(T)$ is a global function field in its own right. It has two distinct types of primes: First, for a monic, irreducible polynomial $f(T)\in\mathbb{F}[T]$, consider the localization $R_f:=\mathbb{F}[T]_{f(T)\mathbb{F}[T]}.$ This local ring is a prime of $\mathbb{F}(T).$ Second, consider the ring $A':=\mathbb{F}[T^{-1}]$ and its localization $R_{\infty}:=A'_{T^{-1}A'}.$ The ring $R_{\infty}$ is also a prime of $\mathbb{F}(T)$ and is commonly called \textit{the prime $\C P_{\infty}$ at infinity of $\mathbb{F}(T)$}. We leave it as an exercise to the reader to check that these two types make up all primes of $\mathbb{F}(T)$.

Also note, that each maximal ideal $\mathfrak{P}$ of the ring of integers $\C O_K$ of a global function field $K$ defines a prime of $K$, namely the localization $R_{\mathfrak{P}}:=(\C O_K)_{\mathfrak{P}}.$ We leave it as an exercise to check that the primes of $K$ arising in this manner are precisely the ones not covering the prime $\C P_{\infty}$ of $\mathbb{F}(T)$ and that the map 
\begin{equation*}
{\rm MaxSpec}(\C O_K)\to\{\C P\text{ prime of }K\mid \C P\text{ does not cover }\C P_{\infty}\},\mathfrak{P}\mapsto R_{\mathfrak{P}}
\end{equation*}
is a bijection. By way of this bijection, we will usually identify maximal ideals of $\C O_K$ with their associated primes.
\begin{remark}\label{choiceS}
Identifying maximal ideals in $\C O_K$ and certain primes of $K$ is less natural than it might seem: Contrary to number fields, where there is a canonical choice for the infinite primes and so the identification between maximal ideals of $\C O_K$ and the finite primes is canonical, for global function one has a choice in concerning which set $S$ of primes are considered to be at infinity and a different choice of $S$ yields a ring of integers $\C O_K^{(S)}$ different from $\C O_K$. Our implicit choice of $S$ amounts to those primes in $K$ covering the prime $\C P_{\infty}$ in $\mathbb{F}(T).$ This choice recovers $\mathbb{F}[T]$ as the ring of integers in $\mathbb{F}(T)$ and $\C O_K$ as the normal closure of $\mathbb{F}[T]$ in $K$ analogous to the situation for number fields where $\mathbb{Z}$ is the ring of integers in $\mathbb{Q}$ and $\C O_K$ its normal closure. It would be possible to state the results of the preprint for $\C O_K^{(S)}$ for other sets $S$ but as long as $S\neq\emptyset$ holds this would not change the statement of Theorem~\ref{main_thm} nor the argument while introducing more notation, so we decided against doing so.
\end{remark}

\subsection{Class field theory}

In this subsection, we will give a brief introduction to results and definitions from class field theory of global function fields needed later on. The theory is very similar to the corresponding theory of algebraic number fields, but there are technical differences related to the absence of infinite places (e.g. Remark~\ref{choiceS}). Let $K$ be a global function field and $\mathfrak{m}$ a non-zero ideal in $\C O_K$. Then define 
\begin{equation*}
J^{\mathfrak{m}}_K:=\{\mathfrak{a}\mid\mathfrak{a}\text{ non-zero fractional ideal in }\C O_K\text{ coprime to }\mathfrak{m}\}.
\end{equation*} 
Recall that a \textit{fractional ideal in $\C O_K$} is an $\C O_K$-submodule $\mathfrak{a}$ of $K$ such that there is a $\lambda\in\C O_K$ with $\lambda\mathfrak{a}\subset\C O_K.$ Also a fractional ideal $\mathfrak{a}$ is called \textit{coprime to $\mathfrak{m}$}, if its prime factorization shares no prime divisor with $\mathfrak{m}$ including negative powers $\C Q^{-k}$ appearing as prime divisors of $\mathfrak{m}$. We consider $J^{\mathfrak{m}}_K$ as a group with multiplication given by products of fractional ideals. Further, set  
\begin{equation*}
P^{\mathfrak{m}}_K:=\{a\C O_K\mid a\in K^*,a\C O_K\in J^{\mathfrak{m}}_K\}.
\end{equation*} 
Then the quotient ${\rm Cl}^{\mathfrak{m}}_K:=J^{\mathfrak{m}}_K/P^{\mathfrak{m}}_K$ is called the \textit{ray class group modulo $\mathfrak{m}.$} 
Next, consider an Galois extension of global function fields $L|K$ and let $\C P$ an unramified prime in $K.$ Then let $\mathfrak{P}$ be a prime in $L$ lying over $\C P$ and let $S_{\mathfrak{P}}\subset L$ be the discrete valuation ring associated with the prime $\mathfrak{P}$. Then there is an element $(\mathfrak{P},L|K)\in{\rm Gal}(L|K)$ defined uniquely by the property 
\begin{equation*}
\forall a\in S_{\mathfrak{P}}:(\mathfrak{P},L|K)(a)\equiv a^{N(\C P)-1}\text{ mod }\mathfrak{P}.
\end{equation*} 
For two primes $\mathfrak{P}_1,\mathfrak{P_2}$ lying over $\C P$, the two elements $(\mathfrak{P}_1,L|K),(\mathfrak{P}_2,L|K)$ of ${\rm Gal}(L|K)$ are conjugate to each other in ${\rm Gal}(L|K).$ Hence for an abelian extension $L|K$, $(\mathfrak{P},L|K)$ does not depend on the choice of $\mathfrak{P}$ and so we set $(\C P,L|K):=(\mathfrak{P},L|K).$ For a fractional ideal $\mathfrak{a}\subset K$ factorizing as $\mathfrak{a}=\prod_{\C P} \C P^{n_{\C P}}$ with primes $\C P$ unramified in an abelian extension $L|K$, one defines  
\begin{equation*}
(\mathfrak{a},L|K):=\prod_{\C P}(\C P,L|K)^{n_{\C P}}\in{\rm Gal}(L|K)
\end{equation*}

Then we have the following famous theorem due to Artin:

\begin{theorem}\cite[Proposition~1.3,Theorem~1.4]{MR1785410}\label{Artin_existence}
Let $K$ be a global function field and $\mathfrak{m}$ a non-zero ideal in $\C O_K$. Then there is an abelian field extension $K_{\mathfrak{m}}|K$ which is unramified outside of prime divisors of $\mathfrak{m}$ such that ${\rm Cl}^{\mathfrak{m}}_K$ is isomorphic to ${\rm Gal}(K_{\mathfrak{m}}|K)$ with an isomorphism given by 
$(\cdot,K_{\mathfrak{m}}|K):{\rm Cl}^{\mathfrak{m}}_K\to{\rm Gal}(K_{\mathfrak{m}}|K),\mathfrak{a}\mapsto(\mathfrak{a},K_{\mathfrak{m}}|K)$.
\end{theorem}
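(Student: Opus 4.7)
The statement combines the Artin reciprocity law with the existence theorem of global class field theory, specialized to ray class groups of the function field $K$. My plan is to follow the standard idelic/cohomological route and then translate the resulting statement back to the ideal-theoretic language used above.

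First, I would set up local class field theory at every prime $\C P$ of $K$, producing continuous reciprocity maps $\theta_{\C P}\colon K_{\C P}^*\to{\rm Gal}(K_{\C P}^{ab}|K_{\C P})$ that send units of $R_{\C P}$ into the inertia subgroup and a uniformizer to the Frobenius on the maximal unramified extension. In positive characteristic the construction can be carried out via Lubin--Tate formal groups for the totally ramified part, while the maximal unramified extension of $K_{\C P}$ is generated by constant field extensions and so has Galois group canonically $\hat{\mathbb{Z}}$. Taking the product over all primes then gives a continuous homomorphism $\theta\colon\mathbb{A}_K^*\to{\rm Gal}(K^{ab}|K)$ on the group of ideles.

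The heart of the matter is to show that $\theta$ vanishes on the diagonal image of $K^*$, so that it descends to the idele class group $C_K=\mathbb{A}_K^*/K^*$. This reduces to the two classical inequalities: for a finite abelian $L|K$, the first inequality $[L:K]\leq[C_K:N_{L|K}C_L]$ follows analytically from non-vanishing at $s=1$ of Hecke $L$-functions over function fields, while the second inequality $[L:K]\geq[C_K:N_{L|K}C_L]$ is established cohomologically by computing, for cyclic $L|K$, that the Herbrand quotient of the idele class group equals $[L:K]$. Together with an explicit verification of reciprocity on a generating family of abelian extensions (constant field extensions and Artin--Schreier extensions play the role of cyclotomic and Kummer extensions in this setting), these force $\theta$ to induce a topological isomorphism from the profinite completion of $C_K$ onto ${\rm Gal}(K^{ab}|K)$. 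The existence theorem that every finite-index open subgroup of $C_K$ arises as a norm group then follows formally.

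To obtain the ray-class version stated here, I would identify ${\rm Cl}^{\mathfrak{m}}_K$ with the appropriate quotient of $C_K$ by an open subgroup determined by $\mathfrak{m}$ and the set $S$ from Remark~\ref{choiceS}, and apply the existence theorem to produce an abelian extension $K_{\mathfrak{m}}|K$ unramified outside $\mathfrak{m}$; restricting $\theta$ to ideles supported away from $\mathfrak{m}$ then recovers the map $\mathfrak{a}\mapsto(\mathfrak{a},K_{\mathfrak{m}}|K)$ on $J^{\mathfrak{m}}_K$ with kernel precisely $P^{\mathfrak{m}}_K$. The main obstacle is the second inequality: proving it honestly requires either a delicate Herbrand quotient computation on the idele class group or the equivalent cohomological input from the Brauer group of $K$, and is by a wide margin the most technical ingredient. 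The remaining steps---local class field theory, the first inequality, and the translation from ideles to ideals---are, while lengthy, essentially formal.
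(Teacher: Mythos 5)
The paper offers no proof of this statement at all: it is imported verbatim from the class field theory literature via the citation to \cite{MR1785410}, so there is no internal argument to measure yours against, and reproducing the proof of the main theorems of class field theory is well outside the scope of the paper. Taken on its own terms, your outline is the standard idelic route and is essentially sound as a roadmap: local reciprocity maps assembled into $\theta\colon\mathbb{A}_K^*\to{\rm Gal}(K^{ab}|K)$, the two norm-index inequalities to kill $K^*$, the existence theorem, and the translation to the ray class group cut out by $\mathfrak{m}$ and the set $S$ of primes at infinity. One concrete slip: you have the two inequalities attributed the wrong way around. For cyclic $L|K$ the Herbrand quotient computation $h(C_L)=[L:K]$ yields the \emph{lower} bound $[C_K:N_{L|K}C_L]\geq[L:K]$ (since $|\hat{H}^0|=[L:K]\cdot|\hat{H}^{-1}|$), whereas the non-vanishing of Hecke $L$-functions at $s=1$ (or Chevalley's algebraic substitute) yields the \emph{upper} bound $[C_K:N_{L|K}C_L]\leq[L:K]$; as written, each of your two inequalities is paired with the proof of the other one. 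A second point worth making explicit in the function field case: the reciprocity map $C_K\to{\rm Gal}(K^{ab}|K)$ is injective with dense but proper image (the degree map gives a $\mathbb{Z}$-quotient completing to $\hat{\mathbb{Z}}$), so obtaining a \emph{finite} abelian extension $K_{\mathfrak{m}}$ requires the finiteness of ${\rm Cl}^{\mathfrak{m}}_K$, which rests on $S\neq\emptyset$ as in Remark~\ref{choiceS}. With those corrections your sketch is a faithful summary of the standard proof, though at this level of compression it is a plan for a proof rather than a proof, which is acceptable here only because the result is textbook material the paper deliberately treats as a black box.
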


\begin{remark}
Fixing an algebraic closure $\overline{K}$ of $K$, the field extension $K_{\mathfrak{m}}|K$ is also unique, when considered in $\overline{K}.$
\end{remark}

\subsection{Finite fields and constant field extensions}

In the proofs to follow we will need various statements concerning finite fields and so-called constant field extensions of global function fields. In particular, we need the following two lemmas describing the composita and intersections of finite fields and constant field extensions. While these lemmas are standard, we will recall their proofs for the convenience of the reader.

\begin{lemma}\label{finite_field_lemma_1}
Let $\mathbb{E},\mathbb{E}_1,\mathbb{E}_2,\mathbb{E}_3,\mathbb{F}$ be finite fields such that $\mathbb{F}\subset\mathbb{E}_i\subset\mathbb{E}$ holds for $i=1,2,3.$ Further, set $a_i:=[\mathbb{E}_i:\mathbb{F}]$ for $i=1,2.$ Then $\mathbb{E}_1\cdot\mathbb{E}_2$ and $\mathbb{E}_1\cap\mathbb{E}_2$ are the unique subextensions $L,K$ of $\mathbb{E}|\mathbb{F}$ with $[L:\mathbb{F}]={\rm lcm}(a_1,a_2)$ and $[K:\mathbb{F}]={\rm gcd}(a_1,a_2)$ respectively. In particular, if $\mathbb{E}_3\cap\mathbb{E}_i=\mathbb{F}$ holds for $i=1,2$, then $\mathbb{E}_3\cap(\mathbb{E}_1\cdot\mathbb{E}_2)=\mathbb{F}$ holds as well. 
\end{lemma}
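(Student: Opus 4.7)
The plan is to reduce the entire lemma to the classical correspondence between subfields of a finite field and divisors of its degree. Set $q:=|\mathbb{F}|$ and $n:=[\mathbb{E}:\mathbb{F}]$, so that $\mathbb{E}=\mathbb{F}_{q^n}$. By the standard classification of finite fields, the subfields of $\mathbb{E}$ containing $\mathbb{F}$ are in bijection with divisors $d$ of $n$ via $d \mapsto \mathbb{F}_{q^d}$, and $\mathbb{F}_{q^{d_1}}\subset\mathbb{F}_{q^{d_2}}$ if and only if $d_1\mid d_2$. In particular, for every divisor $d$ of $n$ there is a \emph{unique} subextension of $\mathbb{E}\mid\mathbb{F}$ of degree $d$ over $\mathbb{F}$, which immediately gives the uniqueness clauses asserted in the lemma.

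With this setup, I would identify $\mathbb{E}_i=\mathbb{F}_{q^{a_i}}$ with $a_i\mid n$. The compositum $\mathbb{E}_1\cdot\mathbb{E}_2$, computed inside the field $\mathbb{E}$, is then the smallest subfield of $\mathbb{E}$ containing both $\mathbb{E}_1$ and $\mathbb{E}_2$; under the divisor correspondence this is the smallest divisor $d$ of $n$ with $a_1\mid d$ and $a_2\mid d$, hence $d=\mathrm{lcm}(a_1,a_2)$. Dually, $\mathbb{E}_1\cap\mathbb{E}_2$ is the largest subfield contained in both, corresponding to the largest divisor of $n$ dividing both $a_1$ and $a_2$, namely $\gcd(a_1,a_2)$. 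This establishes the first part.

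For the ``in particular'' clause, set $a_3:=[\mathbb{E}_3:\mathbb{F}]$. The hypothesis $\mathbb{E}_3\cap\mathbb{E}_i=\mathbb{F}$ for $i=1,2$ translates, via the degree formula just proven, into $\gcd(a_3,a_i)=1$ for $i=1,2$. Applying the same formula to the pair $(\mathbb{E}_3,\mathbb{E}_1\cdot\mathbb{E}_2)$ yields
\begin{equation*}
[\mathbb{E}_3\cap(\mathbb{E}_1\cdot\mathbb{E}_2):\mathbb{F}]=\gcd\bigl(a_3,\mathrm{lcm}(a_1,a_2)\bigr).
\end{equation*}
A standard elementary number-theoretic identity gives $\gcd(a_3,\mathrm{lcm}(a_1,a_2))=\mathrm{lcm}(\gcd(a_3,a_1),\gcd(a_3,a_2))=\mathrm{lcm}(1,1)=1$, so the intersection has degree $1$ over $\mathbb{F}$ and thus equals $\mathbb{F}$.

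There is no real obstacle here: the argument is purely a consequence of the subfield lattice of $\mathbb{F}_{q^n}$ being isomorphic to the divisor lattice of $n$. The only mild subtlety worth flagging is that the compositum and intersection must be computed inside the fixed ambient field $\mathbb{E}$, which is harmless since the hypothesis $\mathbb{E}_i\subset\mathbb{E}$ guarantees that both operations stay inside $\mathbb{E}$ and hence correspond to join and meet in the divisor lattice of $n$.
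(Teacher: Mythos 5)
Your proof is correct and follows essentially the same route as the paper: both arguments rest on the unique-subfield-per-divisor correspondence for finite fields, identify the compositum and intersection with the lcm and gcd of the degrees, and then deduce the final claim from $\gcd(a_3,\mathrm{lcm}(a_1,a_2))=1$. The only cosmetic difference is that the paper re-derives the subfield correspondence via splitting fields of $Z^{q^b}-Z$ while you cite it as standard, and your use of the distributivity identity $\gcd(a_3,\mathrm{lcm}(a_1,a_2))=\mathrm{lcm}(\gcd(a_3,a_1),\gcd(a_3,a_2))$ is a clean way to finish the coprimality step.
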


\begin{proof}
The first step is to note that for an extension $\mathbb{E}|\mathbb{F}$ of finite fields with $[\mathbb{E}:\mathbb{F}]=a$ that for each divisor $b$ of $a$ there is a unique field $\mathbb{F}_b$ with $\mathbb{F}\subset\mathbb{F}_b\subset\mathbb{E}$ and $[\mathbb{F}_b:\mathbb{F}]=b$: Namely, set $q:=\mathbb{F}.$ Then $\mathbb{E}$ is the splitting field of the polynomial $f_a(Z):=Z^{q^a}-Z$ defined over $\mathbb{F}.$ But $f_b(Z):=Z^{q^b}-Z$ is a divisor of $f_a(Z)$ and hence the splitting field $\mathbb{F}_b$ of $f_b$ must be contained in $\mathbb{E}.$ Further, the sum and products of roots of $f_b(Z)$ are also roots of $f_b(Z).$ Hence $\mathbb{F}_b$ consists precisely of the $q^b$ many distinct roots of $f_b(Z).$ Thus $[\mathbb{F}_b:\mathbb{F}]=b$ must hold. On the other hand, if there is a different field $\mathbb{F}_b'$ with $[\mathbb{F}_b':\mathbb{F}]=b,$ then $x^{q^b}-x=0$ must hold for all $x\in\mathbb{F}_b'$ and so $\mathbb{F}_b'$ must also be contained in $\mathbb{F}_b.$ We next note that $[\mathbb{E}_1\cdot\mathbb{E}_2:\mathbb{F}]$ is a multiple of both $[\mathbb{E}_1:\mathbb{F}]=a_1$ and $[\mathbb{E}_2:\mathbb{F}]=a_2$ and so of ${\rm lcm}(a_1,a_2).$ Thus $\mathbb{E}_1\cdot\mathbb{E}_2$ contains $\mathbb{F}_{{\rm lcm}(a_1,a_2)}.$ But $\mathbb{E}_1=\mathbb{F}_{a_1}$ and $\mathbb{E}_2=\mathbb{F}_{a_2}$ must also be contained in $\mathbb{F}_{{\rm lcm}(a_1,a_2)}$. Hence $\mathbb{E}_1\cdot\mathbb{E}_2$ is also contained in $\mathbb{F}_{{\rm lcm}(a_1,a_2)}.$ Thus $\mathbb{F}_{{\rm lcm}(a_1,a_2)}=\mathbb{E}_1\cdot\mathbb{E}_2$ and so $[\mathbb{E}_1\cdot\mathbb{E}_2:\mathbb{F}]={\rm lcm}(a_1,a_2)$ holds. On the other hand, note that there is a unique field $\mathbb{F}_{{\gcd}(a_1,a_2)}$ contained in $\mathbb{E}$ with $[\mathbb{F}_{{\gcd}(a_1,a_2)}:\mathbb{F}]={\gcd}(a_1,a_2).$ But $\mathbb{F}_{{\gcd}(a_1,a_2)}$ must also be contained in both $\mathbb{E}_1$ and $\mathbb{E}_2$. Thus $\mathbb{F}_{{\gcd}(a_1,a_2)}\subset\mathbb{E}_1\cap\mathbb{E}_2.$ But $[\mathbb{E}_1\cap\mathbb{E}_2:\mathbb{F}]$ must divide both $a_1$ and $a_2$ and hence ${\gcd}(a_1,a_2)$. This implies $\mathbb{F}_{{\gcd}(a_1,a_2)}=\mathbb{E}_1\cap\mathbb{E}_2.$ For the last claim, note that $\mathbb{E}_3\cap\mathbb{E}_i=\mathbb{F}$ for $i=1,2$ implies by the second claim that $a_3$ must be coprime to $a_1$ and $a_2.$ But then 
\begin{equation*}
[\mathbb{E}_3\cap(\mathbb{E}_1\cdot\mathbb{E}_2):\mathbb{F}]={\rm gcd}(a_3,[\mathbb{E}_1\cdot\mathbb{E}_2:\mathbb{F}])={\rm gcd}(a_3,{\rm lcm}(a_2,a_3)) 
\end{equation*}
must be $1$ and so the last claim follows.
\end{proof}

\begin{lemma}\label{finite_field_lemma_2}
Let $K$ be a global function field with $\mathbb{F}$ its field of constants. Further, let $\mathbb{E}_1,\mathbb{E}_2$ be two finite field extension of $\mathbb{F}$ contained in some algebraic closure of $K$. Define $K_1$ as the composita of $K\cdot\mathbb{E}_1$ and define $K_2$ similarly. Then $K_1\cdot K_2=K\cdot(\mathbb{E}_1\cdot\mathbb{E}_2)$ and $K_1\cap K_2=K\cdot(\mathbb{E}_1\cap\mathbb{E}_2)$ holds. 
\end{lemma}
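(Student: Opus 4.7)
The plan is to dispatch the compositum identity immediately and then reduce the intersection identity to the Galois theory of constant field extensions.

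For the compositum, the identity $K_1\cdot K_2 = K\cdot(\mathbb{E}_1\cdot\mathbb{E}_2)$ is a purely formal consequence of the definition $K_i = K\cdot\mathbb{E}_i$: both sides are the smallest subfield of the ambient algebraic closure containing $K$, $\mathbb{E}_1$ and $\mathbb{E}_2$. I would write this out in one line and move on.

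For the intersection, one inclusion $K\cdot(\mathbb{E}_1\cap\mathbb{E}_2)\subseteq K_1\cap K_2$ is immediate, since $\mathbb{E}_1\cap\mathbb{E}_2\subseteq\mathbb{E}_i$ implies $K\cdot(\mathbb{E}_1\cap\mathbb{E}_2)\subseteq K\cdot\mathbb{E}_i=K_i$. The reverse inclusion I will prove by a degree count, so it suffices to show $[L:K\cdot(\mathbb{E}_1\cap\mathbb{E}_2)]=[L:K_1\cap K_2]$ where $L:=K\cdot\mathbb{E}$ with $\mathbb{E}:=\mathbb{E}_1\cdot\mathbb{E}_2$. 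The key input is that since $\mathbb{F}$ is algebraically closed in $K$ (as the field of constants of $K$) and $\mathbb{E}/\mathbb{F}$ is a finite separable extension of finite fields, $K$ and $\mathbb{E}$ are linearly disjoint over $\mathbb{F}$. Equivalently, for every subextension $\mathbb{F}\subseteq\mathbb{E}'\subseteq\mathbb{E}$ one has $[K\cdot\mathbb{E}':K]=[\mathbb{E}':\mathbb{F}]$, and the restriction map
\begin{equation*}
\mathrm{res}\colon \mathrm{Gal}(L\mid K)\longrightarrow \mathrm{Gal}(\mathbb{E}\mid\mathbb{F})
\end{equation*}
is an isomorphism sending $\mathrm{Gal}(L\mid K\cdot\mathbb{E}')$ onto $\mathrm{Gal}(\mathbb{E}\mid\mathbb{E}')$.

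Applying this twice gives on one hand $[L:K\cdot(\mathbb{E}_1\cap\mathbb{E}_2)]=[\mathbb{E}:\mathbb{E}_1\cap\mathbb{E}_2]$ directly. On the other hand, by the Galois correspondence for $L\mid K$ the field $K_1\cap K_2$ is the fixed field of the subgroup $\langle\mathrm{Gal}(L\mid K_1),\mathrm{Gal}(L\mid K_2)\rangle$, and under the isomorphism $\mathrm{res}$ this subgroup is identified with $\langle\mathrm{Gal}(\mathbb{E}\mid\mathbb{E}_1),\mathrm{Gal}(\mathbb{E}\mid\mathbb{E}_2)\rangle=\mathrm{Gal}(\mathbb{E}\mid\mathbb{E}_1\cap\mathbb{E}_2)$ (the last equality by Galois theory of $\mathbb{E}\mid\mathbb{F}$). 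Hence $[L:K_1\cap K_2]=[\mathbb{E}:\mathbb{E}_1\cap\mathbb{E}_2]$ as well, and equality of degrees together with the obvious inclusion forces $K_1\cap K_2=K\cdot(\mathbb{E}_1\cap\mathbb{E}_2)$.

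The only step that is not purely formal is the linear disjointness of $K$ and $\mathbb{E}$ over $\mathbb{F}$, which is where the hypothesis that $\mathbb{F}$ is algebraically closed in $K$ is essential. This is the main conceptual obstacle, but it is a standard fact about constant field extensions and I would just cite it (or recall it in one sentence: any $\mathbb{F}$-basis of $\mathbb{E}$ remains $K$-linearly independent, since otherwise a nontrivial algebraic relation would exhibit a new algebraic element of $K$ over $\mathbb{F}$).
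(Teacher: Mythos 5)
Your proof is correct and takes essentially the same route as the paper: the compositum identity is dismissed as formal, and the intersection identity rests on the degree formula $[K\cdot\mathbb{E}':K]=[\mathbb{E}':\mathbb{F}]$ for constant field extensions (Rosen, Proposition~8.1, which the paper cites) together with elementary Galois theory of finite fields --- the paper packages the latter step as Lemma~\ref{finite_field_lemma_1}, while you run the Galois correspondence for $L\mid K$ directly. One small caveat: your parenthetical one-line justification of linear disjointness is not actually a proof (a nontrivial $K$-linear relation among an $\mathbb{F}$-basis of $\mathbb{E}$ does not immediately yield a new element of $K$ algebraic over $\mathbb{F}$), so you should indeed just cite the standard fact as you propose.
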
	

\begin{proof}
The first statement is clear. For the second statement set $a_i:=[\mathbb{E}_i:\mathbb{F}]$ for $i=1,2$. According to \cite[Proposition~8.1]{MR1876657}, we have $[K_i:K]=a_i$ for $i=1,2$ and consequently the second statement follows from Lemma~\ref{finite_field_lemma_1}.
\end{proof}	

\section{Bounded generation by root elements in positive characteristic}
\label{proof_main}

We introduce the following notations: For a rational prime $p$ and a natural number $n,$ we set $v_p(n)$ as the maximal $i\in\mathbb{N}_0$ such that $p^i$ divides $n.$ Further, for an element $b\in\C O_K-\{0\},$ we define $\epsilon(b)$ as the exponent of the finite group $(\C O_K/b\C O_K)^*.$ The crucial step to derive Theorem~\ref{main_thm} are to prove global function field versions of the two technical lemmas \cite[Lemma~3,4]{MR704220} of Carter and Keller. We begin with \cite[Lemma~3]{MR704220}:  

\begin{lemma}\label{tech_lemma_3}
Let $K$ be a global function field and with $\mathbb{F}$ its field of constants and set $m:=|\mathbb{F}|-1.$ Then up to multiplication by three elementary matrices in ${\rm SL}_2(\C O_K)$ the first row of each $A\in{\rm SL}_2(\C O_K)$ can be assumed to have the form $(a^m,b)$ with $b\C O_K$ a prime ideal.
\end{lemma}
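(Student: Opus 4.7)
The plan is to adapt \cite[Lemma~3]{MR704220} of Carter--Keller to the function field setting, replacing Dirichlet's theorem on primes in arithmetic progressions over $\mathbb{Z}$ by its global function field analogue, which I would extract from Theorem~\ref{Artin_existence} together with Chebotarev's density theorem. Write $(a_0, b_0)$ for the first row of $A$, so that $a_0\C O_K + b_0\C O_K = \C O_K$. Right multiplication by $E_{12}(t)$ replaces $(a_0, b_0)$ by $(a_0, b_0 + t a_0)$, while right multiplication by $E_{21}(t)$ replaces it by $(a_0 + t b_0, b_0)$; these are the two moves at our disposal.

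The core of the argument uses two elementary matrices. First, I would apply an $E_{12}(t_1)$ to pass to the row $(a_0, b_1)$ with $b_1 = b_0 + t_1 a_0$ chosen so that simultaneously (i) the ideal $b_1\C O_K$ is prime in $\C O_K$, and (ii) $a_0$ is an $m$-th power in the residue field $\C O_K/b_1\C O_K$. Granted such a $b_1$, I would then pick $c\in\C O_K$ with $c^m \equiv a_0 \pmod{b_1\C O_K}$ and apply $E_{21}(t_2)$ with $t_2 := (c^m - a_0)/b_1 \in\C O_K$, converting the first row to $(c^m, b_1)$, which is the shape demanded by the lemma. A third elementary matrix is reserved as a buffer for degenerate starting positions such as $a_0 \in \{0\}\cup\C O_K^*$ (or $b_0$ of the same type), where the modulus used below becomes trivial and the Chebotarev argument cannot be run directly; in those cases a preliminary $E_{12}$ or $E_{21}$ places us in a non-degenerate configuration for the two-step argument above.

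The existence of $t_1$ with properties (i) and (ii) is the heart of the proof, and I would establish it via Chebotarev applied to a compositum. Condition (i) combined with the congruence $b_1\equiv b_0\pmod{a_0\C O_K}$ prescribes the class of the principal prime ideal $b_1\C O_K$ in the ray class group ${\rm Cl}^{a_0\C O_K}_K$, which via Theorem~\ref{Artin_existence} corresponds to a specified coset of Frobenius elements in ${\rm Gal}(K_{a_0\C O_K}/K)$. Meanwhile, since $\mathbb{F}^*\subset K$ has order $m$ and $\gcd(m, {\rm char}\,K) = 1$, the field $K$ contains a full group of $m$-th roots of unity, so Kummer theory furnishes a finite abelian extension $K(\sqrt[m]{a_0})/K$, and condition (ii) is exactly the statement that the Frobenius of $b_1\C O_K$ in this Kummer extension is trivial. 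Working inside the compositum $L := K_{a_0\C O_K} \cdot K(\sqrt[m]{a_0})$, the two conditions together pin down a particular coset in ${\rm Gal}(L/K)$, and Chebotarev's density theorem then supplies infinitely many unramified primes whose Frobenius lies in that coset.

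The main obstacle will be verifying that the desired coset in ${\rm Gal}(L/K)$ is nonempty, i.e.\ that the ray class condition imposed by ${\rm Cl}^{a_0\C O_K}_K$ is compatible with triviality of the Frobenius on $K(\sqrt[m]{a_0})$. This amounts to controlling the intersection $K_{a_0\C O_K}\cap K(\sqrt[m]{a_0})$, and is where I expect Lemmas~\ref{finite_field_lemma_1} and~\ref{finite_field_lemma_2} on constant field extensions to enter: the Kummer extension $K(\sqrt[m]{a_0})$, or at least the part of it relevant to the compatibility check, should be pushed into a constant field extension, and in the function field setting constant field extensions are unramified at every finite prime, which is exactly the structural reason that the bounds in Theorem~\ref{main_thm} come out independent of $K$.
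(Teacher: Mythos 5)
Your reduction to a single key arithmetic step has the right general shape, but that step is where the whole proof lives, and as formulated it fails: the coset you need in ${\rm Gal}(L/K)$ can genuinely be empty. The congruence $b_1\equiv b_0\bmod a_0\mathcal{O}_K$ pins down the local $m$-th power norm residue symbols $(b_1,a_0)_{\mathfrak q}$ at every prime $\mathfrak q$ dividing $a_0$ (these are tame symbols, since $\gcd(m,{\rm char}\,K)=1$, and $b_0$ is a unit at each such $\mathfrak q$), and the product formula then determines $\bigl(\tfrac{a_0}{b_1\mathcal{O}_K}\bigr)_m$ up to the contribution of the symbols at the places over $\mathcal{P}_\infty$. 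If $a_0$ happens to be an $m$-th power in every completion at infinity, that contribution is trivial and $\bigl(\tfrac{a_0}{b_1\mathcal{O}_K}\bigr)_m$ is forced by $b_0\bmod a_0$ alone --- and it need not equal $1$. The integer analogue makes this concrete: for the row $(5,2)$ of $\bigl(\begin{smallmatrix}5&2\\2&1\end{smallmatrix}\bigr)\in{\rm SL}_2(\mathbb Z)$ with $m=2$, quadratic reciprocity gives $\bigl(\tfrac{5}{p}\bigr)=\bigl(\tfrac{p}{5}\bigr)$, so no prime $p\equiv\pm 2\pmod 5$ has $5$ as a square modulo $p$, and your two-move core cannot even start. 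Relatedly, $K(\sqrt[m]{a_0})$ is \emph{not} close to a constant field extension --- it is ramified at every prime dividing $a_0$ to order prime to $m$ --- so Lemmas~\ref{finite_field_lemma_1} and~\ref{finite_field_lemma_2} cannot neutralize its intersection with the ray class field; those lemmas are used in Lemma~\ref{tech_lemma_4}, not here.

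The paper spends its third elementary matrix precisely on removing this obstruction, not on degenerate inputs (those cost fewer moves, not more). Before touching $b$, it replaces $a_1$ by a generator $a_2$ of a principal prime with $a_2\equiv a_1\bmod b_1\mathcal{O}_K$ and with prescribed behaviour at infinity: by \cite[A16]{MR244257} one chooses $u,w$ with $\bigl(\tfrac{u,w}{R_\infty^{(1)}}\bigr)_m$ generating $\mathbb F^*$ and imposes $a_2\equiv w$ at $R_\infty^{(1)}$ (and $\equiv 1$ at the other infinite places). This makes the local symbol against $a_2$ at $R_\infty^{(1)}$ surjective onto $\mu_m$, so the finite-place obstruction can be cancelled by prescribing the $\infty$-behaviour of the new $b_2\equiv b_1\bmod a_2\mathcal{O}_K$; the $m$-th power reciprocity law then yields $\bigl(\tfrac{a_2}{b_2\mathcal{O}_K}\bigr)_m=1$, and the third matrix converts $a_2$ into $a^m$. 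In your language, that extra step is exactly the (nontrivial) verification that the relevant coset of ${\rm Gal}\bigl(K_{\mathfrak m}\cdot K(\sqrt[m]{a_2})\mid K\bigr)$ is nonempty; it cannot be waved through, and it is why the count is three rather than two.
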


\begin{proof}
Let 
\begin{equation*}
A=
\begin{pmatrix}
a_1 & b_1\\
c_1 & d_1
\end{pmatrix}
\end{equation*}
be given. If $a_1=0,$ then $c_1$ is a unit in $R$ and hence picking any principal, prime ideal $b\C O_K$, the matrix $E_{12}(c_1^{-1})AE_{12}(-b_1+b)$ has the desired form. The case $b_1=0$ is resolved in a similar manner. So assume wlog. that $a_1,b_1\neq 0.$ Let $S=\{R_{\infty}^{(1)},\dots,R_{\infty}^{(L)}\}$ be the set of primes covering the prime $\C P_{\infty}$ of $\mathbb{F}(T)$. Then according to \cite[A16]{MR244257}, for the function
\begin{equation*}
\left(\frac{\cdot,\cdot}{R_{\infty}^{(1)}}\right)_m:K_{R_{\infty}^{(1)}}^*\times K_{R_{\infty}^{(1)}}^*\to\mathbb{F}^*
\end{equation*}
there are $u,w\in K_{R_{\infty}^{(1)}}^*$ such that 
\begin{equation*}
\left(\frac{u,w}{R_{\infty}^{(1)}}\right)_m
\end{equation*}
generates $\mathbb{F}^*.$ Using Dirichlet's Theorem, we can choose a principal prime ideal $a_2\C O_K$ satisfying the following three properties: 
\begin{align*}
a_2\equiv a_1\text{ mod }b_1\C O_K,\forall i=2,\dots,L:a_2\equiv 1\text{ mod }R_{\infty}^{(i)}\text{ and }a_2\equiv w\text{ mod }(R_{\infty}^{(1)})^N
\end{align*}
with $N$ chosen so large that $a_2/w$ has an m.th root in $K_{R_{\infty}^{(1)}}.$ Then note 
\begin{align*}
\left(\frac{u,a_2}{R_{\infty}^{(1)}}\right)_m=\left(\frac{u,a_2/w}{R_{\infty}^{(1)}}\right)_m\cdot\left(\frac{u,w}{R_{\infty}^{(1)}}\right)_m=\left(\frac{u,w}{R_{\infty}^{(1)}}\right)_m.
\end{align*}
Hence there is a positive integer $k$ such that 
\begin{align*}
\left(\frac{b_1,a_2}{R_{\infty}^{(1)}}\right)_m\cdot\left(\frac{u,a_2}{R_{\infty}^{(1)}}\right)_m^k=1.
\end{align*}
Set $v:=u^k.$ Then using Dirichlet's Theorem again, we may choose a principal prime ideal $b_2\C O_K$ satisfying the following three properties
\begin{align*}
b_2\equiv b_1\text{ mod }a_2\C O_K,\forall i=2,\dots,L:b_2\equiv 1\text{ mod }R_{\infty}^{(i)}\text{ and }b_2\equiv v\text{ mod }(R_{\infty}^{(1)})^N
\end{align*}
Again, the integer $N$ is chosen so large that $b_2/v$ is an m.th root of unity in $K_{R_{\infty}^{(1)}}.$ Then according to the m.th power reciprocity law, one has
\begin{equation*}
1=\prod_{\C P} \left(\frac{b_2,a_2}{\C P}\right)_m
\end{equation*}
with the product taken over all primes in $K.$ But note that due to the choice of $a_2\C O_K,b_2\C O_K$ as prime ideals and the fact that $a_2$ and $b_2$ are both chosen to be congruent to $1$ modulo the primes $R_{\infty}^{(i)}$ for $i\geq 2,$ this product reduces to 
\begin{equation*}
1=\left(\frac{b_2,a_2}{b_2\C O_K}\right)_m\cdot\left(\frac{b_2,a_2}{a_2\C O_K}\right)_m\cdot\left(\frac{b_2,a_2}{R_{\infty}^{(1)}}\right)_m
\end{equation*} 
according to \cite[A16]{MR244257}. But as $b_2/v$ is an m.th root in $K_{R_{\infty}^{(1)}},$ this yields
\begin{equation*}
\left(\frac{b_2,a_2}{a_2\C O_K}\right)_m\cdot\left(\frac{b_2,a_2}{R_{\infty}^{(1)}}\right)_m=
\left(\frac{b_2,a_2}{a_2\C O_K}\right)_m\cdot\left(\frac{v,a_2}{R_{\infty}^{(1)}}\right)_m=
\left(\frac{b_2,a_2}{a_2\C O_K}\right)_m\cdot\left(\frac{u^k,a_2}{R_{\infty}^{(1)}}\right)_m=
\left(\frac{b_2,a_2}{a_2\C O_K}\right)_m\cdot\left(\frac{u,a_2}{R_{\infty}^{(1)}}\right)_m^k=1
\end{equation*} 
This implies 
\begin{equation*}
\left(\frac{a_2}{b_2\C O_K}\right)_m^{-1}=\left(\frac{a_2}{b_2\C O_K}\right)_m^{-{\rm ord}_{b_2\C O_K}(b_2\C O_K)}=\left(\frac{b_2,a_2}{b_2\C O_K}\right)_m=1
\end{equation*}
Hence $a_2$ is an m.th power modulo $b_2\C O_K$ and so there is an $a\in\C O_K$ such that $a^m\equiv a_2\text{ mod }b_2\C O_K.$ To finish note that as $a_2-a_1\in b_1\C O_K, b_2-b_1\in a_2\C O_K$ and $a^m-a_2\in b_2\C O_K$, there are $x,y,z\in\C O_K$ such that $a_2=a_1+xb_1,b_2=b_1+ya_2$ and $a^m=a_2+zb_2.$ Hence the matrix $AE_{21}(x)E_{12}(y)E_{21}(z)$ has the desired form.
\end{proof}
\begin{remark}
Note, that the bound in the global function field case is three rather than four as in the number field case \cite[Lemma~3]{MR704220}. The reason for this is that one of the "preprocessing" steps from the number field case served to make $b_1$ prime as integers to the number $m$ of roots of unity in $K$. However, this step has no analog for global function fields. 
\end{remark}
  
Next, we will derive the global function field version of \cite[Lemma~4]{MR704220}:	
	
\begin{lemma}\label{tech_lemma_4}
Let $K$ be a global function field with $\mathbb{F}$ its field of constants. Let $b\in\C O_K-\{0\}$ and a principal ideal $\mathfrak{a}\subset\C O_K$ be given such that $b\C O_K$ is a prime ideal and $\mathfrak{a}+b\C O_K=\C O_K$. Then for every $u\in\C O_K^*,$ there exists an element $c\in\C O_K$ such that $bc\equiv u\text{ mod }\mathfrak{a}$ and such that ${\rm gcd}(\epsilon(b),\epsilon(c))=|\mathbb{F}|-1=m.$
\end{lemma}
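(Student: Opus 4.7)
The strategy is to find $c\in\C O_K$ such that $c\C O_K$ is a principal prime ideal whose degree $g$ over $\mathbb{F}$ satisfies $\gcd(f,g)=1$, where $f:=[\C O_K/b\C O_K:\mathbb{F}]$, together with $bc\equiv u\text{ mod }\mathfrak{a}$. This is enough: since $b\C O_K$ is prime we have $(\C O_K/b\C O_K)^*\cong\mathbb{F}_{q^f}^*$, so $\epsilon(b)=q^f-1$ and similarly $\epsilon(c)=q^g-1$, and the elementary identity $\gcd(q^f-1,q^g-1)=q^{\gcd(f,g)}-1$ then gives $\gcd(\epsilon(b),\epsilon(c))=q-1=m$.

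First we reduce the congruence: because $\mathfrak{a}+b\C O_K=\C O_K$, the element $b$ is a unit modulo $\mathfrak{a}$, so we can pick $c_0\in\C O_K$ with $bc_0\equiv u\text{ mod }\mathfrak{a}$; every $c$ in the coset $c_0+\mathfrak{a}$ then satisfies the required congruence. The task reduces to producing a principal prime ideal $\C P=c\C O_K$ with $c\equiv c_0\text{ mod }\mathfrak{a}$ (possibly after multiplying the generator by a constant in $\mathbb{F}^*\subseteq\C O_K^*$) whose degree is coprime to $f$.

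To produce such a $\C P$ I plan to combine Dirichlet's theorem for global function fields, in the form used in the proof of Lemma \ref{tech_lemma_3}, with a Chebotarev-type condition in the constant field extension $K_f:=K\cdot\mathbb{F}_{q^f}$. A prime $\C P$ of $K$ has Frobenius $\mathrm{Frob}_q^{\deg\C P}$ in $\mathrm{Gal}(K_f|K)\cong\mathbb{Z}/f\mathbb{Z}$, so $\gcd(\deg\C P,f)=1$ is equivalent to this Frobenius being a generator of the Galois group. Taking the compositum $L_\mathfrak{a}\cdot K_f$, with $L_\mathfrak{a}$ a strict ray class field of $K$ modulo $\mathfrak{a}$ controlling the residue class $c_0+\mathfrak{a}$, Chebotarev applied inside $L_\mathfrak{a}\cdot K_f$ produces principal primes realizing any jointly compatible pair of Frobenius conditions on the two factors.

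The main obstacle is verifying compatibility on the overlap $L_\mathfrak{a}\cap K_f$, which by Lemma \ref{finite_field_lemma_2} is itself a constant field extension $K_e$ with $e\mid f$. Since any generator of $\mathrm{Gal}(K_f|K)$ restricts to a generator of $\mathrm{Gal}(K_e|K)$, and all $\phi(e)$ generators of $\mathrm{Gal}(K_e|K)$ arise in this way, compatibility reduces to arranging that the Frobenius prescribed on $L_\mathfrak{a}$ restricts to a generator of $\mathrm{Gal}(K_e|K)$; by Artin reciprocity applied to the constant field subextension, this amounts to $\gcd(\deg(c_0\C O_K),e)=1$. This is precisely the sort of compatibility question that, as emphasized in the introduction, is handled cleanly in the function field setting by the unramifiedness of constant field extensions — I expect the proof to dispose of it by choosing $L_\mathfrak{a}$ (for instance its geometric part, the maximal subextension containing no nontrivial constant field extension) so as to force $e=1$. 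Once compatibility is in hand, Chebotarev yields a principal prime $c'\C O_K$, and scaling by a suitable constant $\zeta\in\mathbb{F}^*$ produces $c=\zeta c'$ with $c\equiv c_0\text{ mod }\mathfrak{a}$ exactly.
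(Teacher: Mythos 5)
Your reduction is appealing and the arithmetic behind it is correct: if $c\C O_K$ were a single prime of residue degree $g$ over $\mathbb{F}$ with $\gcd(f,g)=1$, then indeed $\gcd(\epsilon(b),\epsilon(c))=\gcd(q^f-1,q^g-1)=q^{\gcd(f,g)}-1=m$. The gap is the existence of such a prime, i.e. exactly the compatibility obstruction you identify and then propose to wave away. The maximal constant subextension $K_e$ of $K_{\mathfrak{a}}$ is not at your disposal: it corresponds under the Artin isomorphism to a quotient of ${\rm Cl}^{\mathfrak{a}}_K$ determined by $\mathfrak{a}$ and $K$, and passing to the geometric part of $K_{\mathfrak{a}}$ discards precisely the information you need. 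If you apply Chebotarev only in (geometric part)$\cdot K_f$, the resulting prime is constrained only in a proper quotient of the ray class group, so you can no longer conclude $c\equiv\zeta c_0\bmod\mathfrak{a}$ for any constant $\zeta\in\mathbb{F}^*$. Moreover the obstruction is genuine and not removable by other means: since $K_e\subset K_{\mathfrak{a}}$ is a constant field extension, the Frobenius of a prime $\C P$ in ${\rm Gal}(K_e|K)\cong\mathbb{Z}/e\mathbb{Z}$ is $\deg\C P\bmod e$, which is therefore constant on each ray class. Hence if a rational prime $\ell$ divides $e$, $f$ and $\deg(c_0\C O_K)$ simultaneously, every prime in the ray class of $c_0\C O_K$ has degree divisible by $\ell$, and no single prime of degree coprime to $f$ exists in that class. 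The lemma is true, but not along this route.

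The paper escapes this by weakening both requirements. It does not ask for a single prime but writes $c\C O_K=\C P_1\C P_2$ with the product (not each factor) in the prescribed ray class; and it does not ask for $\gcd(\deg\C P_j,f)=1$ but only that, for each of the finitely many rational primes $p$ with $v_p(\epsilon(b))>v_p(m)$, the prime $\C P_j$ fail to split completely in the small constant field extension $L_p=K\cdot\mathbb{F}[\xi_p]$, which pins $v_p(\epsilon(\C P_j))$ down to $v_p(m)$. What makes the two-prime version work is that $b\C O_K$ splits completely in each $L_p$ (because $\xi_p$ already lies in $\C O_K/b\C O_K$), so the product constraint restricted to the constant-field part is the identity; one then chooses $\sigma_1$ nontrivial on every relevant constant subextension, sets $\sigma_2=\sigma\sigma_1^{-1}$, and applies Chebotarev directly in the full compositum $K_{\mathfrak{a}}\cdot L$, where no compatibility check is needed because the two Frobenius elements are prescribed as elements of the Galois group of the compositum itself. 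You would need to replace your final step with an argument of this kind.
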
		
				
\begin{proof}
First, we consider the finite field $\mathbb{E}:=\C O_K/b\C O_K.$ For each rational prime $p\in\mathbb{N}$ choose $e_p\in\mathbb{N}_0$ maximal such that $p^{e_p}$ divides $m.$ Note that $(\C O_K/b\C O_K)^*$ contains $\mathbb{F}^*$ as a subgroup and so $p^{e_p}$ divides $|(\C O_K/b\C O_K)^*|=\epsilon(b).$ Thus $v_p(\epsilon(b))\geq e_p$ holds for all rational primes $p$. Set $S:=\{p_1,\dots,p_l\}$ as the finite set of primes $p$ such that $v_p(\epsilon(b))>e_p.$ If $S=\emptyset$, then $\epsilon(b)=m$ and so picking any element $c\in\C O_K$ with $bc\equiv u\text{ mod }\mathfrak{a}$ we are done. So we assume in the following that $S\neq\emptyset.$ For each $p\in S$, pick a primitive $p^{e_p+1}$.th root of unity $\xi_p$ in $\mathbb{E}$ and consider the finite fields $\mathbb{E}_i:=\mathbb{F}[\xi_{p_i}]$ for $i=1,\dots,l$. We will next inductively describe a process to find subfields $\mathbb{L}_1^{(t)},\dots,\mathbb{L}_t^{(t)}$ of $\mathbb{E}_1,\dots,\mathbb{E}_t$ for all $t\leq l$ such that $\mathbb{L}_i^{(t)}\cap \mathbb{L}_j^{(t)}=\mathbb{F}$ or $\mathbb{L}_i^{(t)}=\mathbb{L}_j^{(t)}\neq\mathbb{F}$ holds for all $1\leq i,j\leq t.$ First, if $t=1$, there is nothing to do. So assume $t>1$ and assume further that the subfields $\mathbb{L}_1^{(t-1)},\dots,\mathbb{L}_{t-1}^{(t-1)}$ are already chosen as required. Then we check whether there is an $i$ between $1$ and $t-1$ such that $\mathbb{L}_i^{(t-1)}\cap\mathbb{E}_t\neq\mathbb{F}.$ If there is such an $i$, then we set $\mathbb{L}_i^{(t)}:=\mathbb{L}_t^{(t)}:=\mathbb{L}_i^{(t-1)}\cap\mathbb{E}_t$ for all such $i\leq t$ and set $\mathbb{L}_h^{(t)}:=\mathbb{L}_h^{(t-1)}$ for all other $h\leq t-1$. However, if $\mathbb{E}_t\cap\mathbb{L}_i^{(t-1)}=\mathbb{F}$ holds for all $i\leq t$, then we set $\mathbb{L}_i^{(t)}:=\mathbb{L}_i^{(t-1)}$ for $i\leq t-1$ and $\mathbb{L}_t^{(t)}:=\mathbb{E}_t.$ Clearly, these choices of $\mathbb{L}_i^{(t)}$ have the required properties. Next, consider the constant field extensions $K_1:=\mathbb{L}_1^{(l)}\cdot K,\dots,K_l:=\mathbb{L}_l^{(l)}\cdot K$ of $K$. For simplicity, we will also assume in the following that all the fields $K_1,\dots,K_l$ are actually distinct. Also consider the compositum $L:=\prod_{i=1}^l K_l.$ Next, consider the field $B_l:=K_l\cap\prod_{i=1}^{l-1} K_i.$ Note that according to Lemma~\ref{finite_field_lemma_2}, we know that
\begin{align*}
B_l=K_l\cap\prod_{i=1}^{l-1}K_i=(K\cdot\mathbb{L}_l^{(l)})\cap(\prod_{i=1}^{l-1}K\cdot\mathbb{L}_i^{(l)})=K\cdot(\mathbb{L}_l^{(l)}\cap\prod_{i=1}^{l-1}\mathbb{L}_i^{(l)}).
\end{align*}
However, by the construction of the $\mathbb{L}_i^{(l)}$, we know that $\mathbb{L}_l^{(l)}\cap\mathbb{L}_i^{(l)}=\mathbb{F}$ holds for all $i=1,\dots,l-1$ and hence  
$(\mathbb{L}_l^{(l)}\cap\prod_{i=1}^{l-1}\mathbb{L}_i^{(l)})=\mathbb{F}$ according to the last statement in Lemma~\ref{finite_field_lemma_1}. Thus $B_l=K.$ Hence we have an isomorphism of Galois groups:
\begin{equation*}
{\rm Gal}(L\mid K)\to{\rm Gal}(K_l|K)\times{\rm Gal}(\prod_{i=1}^{l-1}K_i|K),\sigma\mapsto(\sigma|_{K_l},\sigma|_{\prod_{i=1}^{l-1}K_i}).
\end{equation*}
Iterating this argument, we obtain that ${\rm Gal}(L|K)=\prod_{i=1}^l{\rm Gal}(K_i|K).$ Also note that $L$ is a constant field extension of $K$ and hence an unramified field extension according to \cite[Proposition~8.5]{MR1876657}. Next, consider the ray class field $K_{\mathfrak{a}}$ associated to the ideal $\mathfrak{a}.$ By assumption $b\C O_K$ and $\mathfrak{a}$ are coprime and so $b\C O_K$ is unramified in $K_{\mathfrak{a}}|K.$ Hence $b\C O_K$ is also unramified in the extension $L_{\mathfrak{a}}|K$ for $L_{\mathfrak{a}}:=K_{\mathfrak{a}}\cdot L.$ Thus we may consider the element $\sigma:=(b\C O_K,L_{\mathfrak{a}}|K)\in{\rm Gal}(L_{\mathfrak{a}}|K).$ Next, we define the constant field extensions $L_p:=K\cdot\mathbb{F}[\xi_p]$ of $K$ for $p\in S$ and consider the restrictions $\sigma_p:=\sigma|_{L_p}=(b\C O_K,L_p|K)$ for $p\in S.$ 
\begin{claim}
One has $\sigma_p={\rm id}_{L_p}$ for $p\in S.$
\end{claim}
Let $Q$ be a prime in $\C O_{L_p}$ covering $b\C O_K.$ Then $\C O_{L_p}/Q=(\C O_K/b\C O_K)[\overline{\xi}_p]$ holds for $\overline{\xi}_p$ the image of $\xi_p$ in $\C O_{L_p}/Q$ according to \cite[Proposition~8.10]{MR1876657}. But note that $v_p(\epsilon(b))>e_p$ and so $p^{e_p+1}$ divides $\epsilon(b)=|(\C O_K/b\C O_K)^*|.$ This implies in particular that $\overline{\xi}_p^{|(\C O_K/b\C O_K)^*|}=1$ holds and thus $\overline{\xi}_p$ is a root of the polynomial $F(z):=z^{|(\C O_K/b\C O_K)^*|}-1\in (\C O_K/b\C O_K)[z].$ But the roots of this polynomial are precisely the elements of $(\C O_K/b\C O_K)^*$ and so $\overline{\xi}_p$ is an element of $(\C O_K/b\C O_K)^*.$ Thus $\C O_{L_p}/Q=\C O_K/b\C O_K$. Hence the relative degree $f(Q|b\C O_K)$ is equal to $1.$ But then \cite[Proposition~9.10]{MR1876657} implies $\sigma_p=(b\C O_K,L_p|K)=1.$ 

Next, note that for $i=1,\dots,l,$ each group ${\rm Gal}(K_i|K)={\rm Gal}(\mathbb{L}_i^{(l)}|\mathbb{F})$ is non-trivial by construction and so as ${\rm Gal}(L|K)=\prod_{i=1}^l{\rm Gal}(K_i|K)$ is a quotient of ${\rm Gal}(L_{\mathfrak{a}}|K)$, we may choose an element $\sigma_1\in{\rm Gal}(L_{\mathfrak{a}}|K)$ with $\sigma_1|_{K_i}\neq{\rm id}_{K_i}$ for all $i=1,\dots,l.$ Set further $\sigma_2:=\sigma\sigma_1^{-1}.$ Then according to Tchebotarevs density theorem \cite[Theorem~9.13A]{MR1876657}, we may choose two primes $\C P_1,\C P_2$ unramified in $L_{\mathfrak{a}}|K$ such that $(\C P_j,L_{\mathfrak{a}}|K)=\sigma_j^{-1}$ holds for $j=1,2.$ Note that as the $L_p$ for $p\in S$ are constant field extensions of $K,$ the primes $\C P_j$ are automatically unramified in $L_p|K$ for all $p\in S.$ So we can also consider $(\C P_j,L_p|K)\in{\rm Gal}(L_p|K)$ for all $p\in S.$ But note that by the construction of the $K_i$, there is an $i=1,\dots,l$ such that $K_i\subset L_p$. But $(\C P_j,L_p|K)$ maps to $(\C P_j,K_i|K)=\sigma_j|_{K_i}\neq{\rm id}_{K_i}$ under the restriction homomorphism ${\rm Gal}(L_p|K)\to{\rm Gal}(K_i|K).$ Hence $(\C P_j,L_p|K)=\sigma_j|_{L_p}^{-1}\neq{\rm id}_{L_p}$ holds for $j=1,2$ and $p\in S.$ Thus \cite[Proposition~9.10]{MR1876657} implies that neither $\C P_1$ nor $\C P_2$ split completely in the extension $L_p|K$ for any $p\in S.$ But similar to the proof of the claim above this implies that $v_p(|\C O_K/\C P_j|-1)=e_p$ for $j=1,2$ and $p\in S$, because $v_p(|\C O_K/\C P_j|-1)>e_p$ would imply that $\C P_j$ does split completely in the unramified field extension $L_p|K$. Next, choose $c_0\in \C O_K$ such that $bc_0\equiv u\text{ mod }\mathfrak{a}.$ But this implies 
\begin{align*}
(\C P_1\cdot \C P_2,K_{\mathfrak{a}}|K)&=(\C P_1,K_{\mathfrak{a}}|K)\cdot(\C P_2,K_{\mathfrak{a}}|K)=(\sigma_1^{-1}|_{K_{\mathfrak{a}}})\cdot(\sigma_2^{-1}|_{K_{\mathfrak{a}}})=\sigma|_{K_{\mathfrak{a}}}^{-1}=(b\C O_K,K_{\mathfrak{a}}|K)^{-1}\\
&=(c_0\C O_K,K_{\mathfrak{a}}|K).
\end{align*}
Next, Theorem~\ref{Artin_existence} implies that there is a $\lambda\in K^*$ multiplicatively congruent to $1$ modulo $\mathfrak{a}$ such that $c_0\C O_K=\lambda\C P_1\cdot\C P_2.$ So setting $c:=c_0\lambda^{-1}$, one obtains $c\C O_K=\C P_1\cdot\C P_2.$ Hence $\epsilon(c)={\rm lcm}(\epsilon(\C P_1),\epsilon(\C P_2))$ holds. Then let $p$ be a rational prime. If $p\in S,$ then $v_p(\epsilon(c))=\min\{v_p(\epsilon(\C P_1)),v_p(\epsilon(\C P_2))\}=e_p.$ On the other hand, if $p\notin S,$ then $v_p(\epsilon(b))=e_p$. Hence $v_p({\rm gcd}(\epsilon(b),\epsilon(c)))=e_p$ holds for all rational primes $p$ and so ${\rm gcd}(\epsilon(b),\epsilon(c))=m.$
\end{proof}	 
	
\begin{remark}
We would like to consider the field extension $\mathbb{F}[\xi_p\mid p\in S]\cdot K$ of $K$ in the above argument and show that ${\rm Gal}(\mathbb{F}[\xi_p\mid p\in S]\cdot K|K)$ decomposes as the direct product $\prod_{p\in S}{\rm Gal}(L_p|K).$ However, this strategy doesn't work as the various $L_p$ do in some cases intersect in non-trivial ways. The construction of the $K_i$ is done to circumvent precisely this problem.
\end{remark}	
	
Using these two lemmas, one can now prove Theorem~\ref{main_thm}:

\begin{proof}
Let $A\in{\rm SL}_n(\C O_K).$ Then using the stability argument from the proof of \cite[Main Theorem]{MR704220}, one can transform $A$ into an element of the form 
\begin{equation*}
A_1=
\begin{pmatrix}
\begin{matrix}
a_1 & b_1\\
c_1 & d_1
\end{matrix}
& 0\\
0 & I_{n-2}
\end{pmatrix}
\end{equation*} 
by multiplication with $1/2(3n^2-n-10)$ elementary matrices. Using Lemma~\ref{tech_lemma_3}, one can then transform $A_1$ into a matrix 
\begin{equation*}
A_2=
\begin{pmatrix}
\begin{matrix}
a^m & b\\
c_2 & d_2
\end{matrix}
& 0\\
0 & I_{n-2}
\end{pmatrix}
\end{equation*} 
by multiplication with three elementary matrices. But note that by Lemma~\ref{tech_lemma_4}, we can find an element $c\in\C O_K$ such that $bc\equiv -1\text{ mod }a\C O_K$ and such that ${\rm gcd}(\epsilon(b),\epsilon(c))=m.$ Consequently, there is an element 
\begin{equation*}
A_3=
\begin{pmatrix}
\begin{matrix}
a & b\\
c & d_3
\end{matrix}
& 0\\
0 & I_{n-2}
\end{pmatrix}^m
\end{equation*} 
of ${\rm SL}_n(\C O_K).$ Then according to \cite[Lemma~1]{MR704220}, one can transform $A_2$ into $A_3$ by multiplication with $16$ elementary matrices and according to \cite[Lemma~2]{MR704220}, the matrix $A_3$ can be transformed into the identity matrix by multiplication with $36$ elementary matrices. Thus in total, the initial matrix $A$ is a product of at most $1/2(3n^2-n-10)+3+16+36=1/2(3n^2-n)+50$ elementary matrices.  
\end{proof}	

	
\section{What remains}

After some research activity in the last fifteen or so years bounded generation by root elements for split Chevalley groups defined over rings of integers in global fields is now essentially understood: On the qualitative side, the Carter-Keller-Paige approach presented by Morris \cite{MR2357719} serves as a uniform framework to prove bounded generation results first for ${\rm SL}_n$ of number fields and as seen in \cite{Chevalley_positive_char_tentative} also for global function fields and the other split Chevalley groups.   

On the quantitative side, the main question previously was whether there are only bounds for elementary bounded generation depending on the global field in question or not. We denote the minimal number of elementary matrices needed to write any element of ${\rm SL}_n(R)$ by $\nu_n(R)$. For $n\geq 3,$ standard stable range considerations yield 
\begin{equation*}
\nu_n(R)\leq 1/2(3n^2-n)-27+\nu_3(R)
\end{equation*}
for $R$ the ring of S-algebraic integers in a global field $K$ with $S$ a non-empty set of valuations of $K$ containing all archimedean places of $K.$ We fix $K,S$ and $R$ like this in the following. As seen by Rapinchuk, Morgan and Sury \cite{MR3892969} if $K$ is a number field but not imaginary quadratic or $\mathbb{Z}$, one has $\nu_3(R)\leq 16$. This preprint establishes in Theorem~\ref{main_thm} for a global function field $K$ that $\nu_3(R)\leq 62$ holds. The remaining cases are the imaginary quadratic integers and $\mathbb{Z}$. But the ring of imaginary quadratic integers $R_D$ in the number field $\mathbb{Q}[\sqrt{-D}]$ are the rings of all algebraic integers of a number field of degree two over $\mathbb{Q}.$ Thus \cite[Theorem~1.2]{MR2357719} implies that there is a natural number $L'(2,3)$ independent of the precise $D$ such that $\nu_3(R_D)\leq L'(2,3)$. Thus setting $L(2,3):=\max\{L'(2,3),62,v_3(\mathbb{Z})\}-27$, we obtain the following:

\begin{theorem}\label{really?}
There is an $L(2,3)\in\mathbb{N}$ such that for each global field $K$, each non-empty set $S$ of valuations of $K$ containing all its archimedean places and $R$ the ring of all $S$-algebraic integers in $K$, the inequality $\nu_n(R)\leq L(2,3)+1/2(3n^2-n)$ holds for all $n\geq 3$.  
\end{theorem}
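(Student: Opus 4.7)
The plan is to reduce everything to a uniform bound on $\nu_3(R)$ across all global fields and admissible $S$, then invoke the stable range inequality $\nu_n(R)\leq 1/2(3n^2-n)-27+\nu_3(R)$ stated just before the theorem. Since this stable range inequality absorbs the $n$-dependence, the theorem reduces to showing that there exists a single constant $C$ such that $\nu_3(R)\leq C$ for every ring $R$ of $S$-algebraic integers in every global field, with $S\neq\emptyset$ containing all archimedean places. Setting $L(2,3):=C-27$ then finishes the argument.

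To produce such a $C$, I would split into four mutually exhaustive cases according to the type of $(K,S)$. First, if $K$ is a global function field, then $S$ is an arbitrary non-empty set of places of $K$; for $S$ equal to the primes lying over $\C P_\infty$ in $\mathbb{F}(T)$, Theorem~\ref{main_thm} applied with $n=3$ yields $\nu_3(\C O_K)\leq 1/2(3\cdot 9-3)+50=62$, and for other non-empty $S$ the ring $\C O_K^{(S)}$ contains $\C O_K$ as a subring, so $\nu_3(\C O_K^{(S)})\leq\nu_3(\C O_K)\leq 62$ since elementary matrices in the smaller ring remain elementary in the larger. Second, if $K$ is a number field other than $\mathbb{Q}$ or an imaginary quadratic field, the result of Rapinchuk--Morgan--Sury \cite{MR3892969} gives $\nu_3(R)\leq 16$ independent of $K$ and $S$. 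Third, if $K$ is imaginary quadratic, the ring $R_D$ of algebraic integers in $\mathbb{Q}[\sqrt{-D}]$ is the full ring of integers in a degree-two number field, so \cite[Theorem~1.2]{MR2357719} provides a uniform bound $L'(2,3)$ depending only on the degree $2$ and the rank $3$, not on $D$; this extends to all $S$ containing the archimedean places via inclusion of rings. Fourth, for $K=\mathbb{Q}$, the Carter--Keller theorem \cite{MR704220} gives $\nu_3(\mathbb{Z})<\infty$, and again enlarging $S$ only shrinks the number of elementary factors required.

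With these four uniform bounds in hand, one sets $C:=\max\{62,\,16,\,L'(2,3),\,\nu_3(\mathbb{Z})\}$ and $L(2,3):=C-27$, and the stable range estimate yields $\nu_n(R)\leq L(2,3)+1/2(3n^2-n)$ in every case. The main subtlety is verifying that each cited bound really is independent of $S$: for the function field case this follows from Theorem~\ref{main_thm} together with the monotonicity $\nu_3(\C O_K^{(S)})\leq\nu_3(\C O_K^{(S')})$ whenever $S\supset S'$, and analogous monotonicity remarks handle the number field cases. I do not expect a genuine obstacle here, since the work of establishing the uniformity in the function field case was already carried out in the proof of Theorem~\ref{main_thm}; the present theorem is essentially a bookkeeping assembly of the four regimes into one statement.
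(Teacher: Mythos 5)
Your decomposition into four regimes (global function fields via Theorem~\ref{main_thm}, generic number fields via Rapinchuk--Morgan--Sury, imaginary quadratic fields via Morris's Theorem~1.2, and $\mathbb{Z}$ via Carter--Keller), followed by taking the maximum of the resulting bounds on $\nu_3$ and feeding it into the stable range inequality, is exactly the paper's argument; the constants and citations match. The one place where your write-up diverges is the treatment of general $S$, and there your justification does not work as stated. You claim the monotonicity $\nu_3(\C O_K^{(S)})\leq\nu_3(\C O_K^{(S')})$ for $S\supset S'$ on the grounds that ``elementary matrices in the smaller ring remain elementary in the larger.'' That observation only shows that every element of ${\rm SL}_3(\C O_K^{(S')})$ is a bounded product of elementary matrices over the larger ring $\C O_K^{(S)}$; it says nothing about the elements of ${\rm SL}_3(\C O_K^{(S)})$ that do \emph{not} lie in ${\rm SL}_3(\C O_K^{(S')})$, and those are precisely what $\nu_3(\C O_K^{(S)})$ must control. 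Bounded elementary generation is not inherited along ring inclusions in this naive way, so the same objection applies to your ``analogous monotonicity remarks'' in the number field cases.

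The paper sidesteps this differently. For function fields, Remark~\ref{choiceS} records that the entire proof of Theorem~\ref{main_thm} goes through verbatim for $\C O_K^{(S)}$ with any non-empty $S$ --- the argument is re-run for the larger ring, not transported along an inclusion. For number fields, enlarging $S$ beyond the archimedean places produces a ring with infinitely many units, so every imaginary quadratic or rational case with $|S|\geq 2$ already falls under the Rapinchuk--Morgan--Sury bound of $16$; what remains are exactly the full rings of integers $R_D$ and $\mathbb{Z}$, which is why the paper only needs Morris's theorem for those specific rings. If you replace your monotonicity step with these two observations, your proof is complete and coincides with the paper's.
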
 
 
So bounded elementary generation has bounds completely independent of the precise global field $K$ and the precise set $S$. This answers questions raised in \cite{MR3892969} and mentioned in a preprint by Kunyavskii, Plotkin and Vavilov \cite[9.~Final Remarks]{https://doi.org/10.48550/arxiv.2204.10951}. On the other hand, the precise values for $\nu_n(R)$ themselves remain a mystery: A simple counting argument shows that $\nu_n(R)$ being quadratic in $n$ is at least the correct asymptotic, but the bound $L(2,3)$ above is non-explicit as it arises (among else) from a model-theoretic compactness argument and it would be desirable to replace it with some explicit number independent of $D.$ Additionally, it is very unpleasant that three very different line of arguments are needed to seperately treat the three cases of global function fields, imaginary quadratic number fields and the rest. Thus it stands to reason that there is a better method to treat them all at once. Given Theorem~\ref{really?}, one might even nurture the following hope:

\begin{conjecture}
There is a function $f:\mathbb{N}\to\mathbb{N}$ such that for any $K,S$ and $R$ as in Theorem~\ref{really?}, one has $\nu_n(R)=f(n)$ for all $n\geq 3.$
\end{conjecture}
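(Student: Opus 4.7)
The conjecture substantially strengthens Theorem~\ref{really?}: it asserts not merely a uniform upper bound on $\nu_n(R)$, but an exact, $R$-independent value $f(n)$. The plan would be to attack matching lower and upper bounds separately, taking the preprint's bound $L(2,3) + \frac{1}{2}(3n^2-n)$ as the starting upper estimate and trying to show it is essentially sharp.

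For the lower bound $\nu_n(R) \geq f(n)$ independent of $R$, I would view a product of $N$ elementary matrices as a polynomial map $R^N \to {\rm SL}_n(R)$ and estimate the size of its image. On the generic fiber, dimension counts force $N$ to grow at least linearly in $n$, and a sharper analysis, exploiting the Steinberg commutator relations among elementary matrices to bound the length of canonical words, should raise this to a lower bound quadratic in $n$, matching the order of growth to which the author already alludes in Section~4. The delicate point is to identify the correct leading constant, so that it agrees with the value coming from the upper bound; this should be accessible by studying the image of products of elementary matrices through a Chevalley commutator calculus, possibly combined with a specialization argument over a sufficiently generic $R$.

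Pinning down the matching upper bound is the main obstacle. The current proofs of bounded elementary generation split into three essentially disjoint arguments: the Carter--Keller--Paige approach via Morris for number fields that are neither $\mathbb{Q}$ nor imaginary quadratic, Morris's model-theoretic compactness for imaginary quadratic rings, and the arguments of this preprint for global function fields (with $\mathbb{Z}$ treated by explicit computation). The imaginary quadratic constant, in particular, is \emph{not} explicit, so even naming a candidate value for $f(n)$ is presently out of reach. To prove the conjecture, one would need a single unified construction producing a global-field-independent elementary normal form for each matrix in ${\rm SL}_n(R)$ valid in both zero and positive characteristic simultaneously. Such a unification, hinted at in the author's concluding remarks, would most likely require a new uniform treatment of the ${\rm SL}_2$-reduction step, bypassing the dichotomy between Dirichlet-theorem/reciprocity arguments used for $\mathbb{F}[T]$-style ideal manipulations (as in Lemmas~\ref{tech_lemma_3} and~\ref{tech_lemma_4}) and the unit-theoretic arguments used over number fields. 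Achieving that uniformity is where I would expect the real work to lie.
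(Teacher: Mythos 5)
This statement is a \emph{conjecture}: the paper offers no proof of it, and indeed the surrounding discussion in Section 4 makes clear that the author regards it as open. Your submission is, accordingly, not a proof but a research programme, and you are candid about that — you correctly identify the same obstacles the paper itself names (the non-explicit imaginary quadratic constant coming from a compactness argument, and the fact that the three cases of global function fields, imaginary quadratic rings, and the remaining number rings are handled by disjoint methods that would need to be unified). Since there is no proof in the paper to compare against, the only honest verdict is that the conjecture remains unproved by both the paper and your proposal.

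Two small substantive remarks on your sketch. First, the dimension count is already quadratic, not linear: $\dim {\rm SL}_n = n^2-1$ and each elementary matrix contributes a single parameter, which is presumably the ``simple counting argument'' the paper alludes to for the lower bound $\nu_n(R)\gtrsim n^2$; no refinement via Steinberg relations is needed to reach quadratic growth. Second, your plan implicitly assumes that the eventual $f(n)$ should match the upper bound $L(2,3)+\tfrac{1}{2}(3n^2-n)$ of Theorem~\ref{really?}, but the conjecture only asserts that $\nu_n(R)$ is \emph{constant in $R$}, not that it equals any currently known bound; establishing equality of $\nu_n(R)$ across all $R$ (e.g.\ that $\nu_3$ of a function field ring equals $\nu_3$ of an imaginary quadratic ring) is a different, and arguably harder, problem than sharpening either bound separately. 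Your identification of a unified ${\rm SL}_2$-reduction as the crux is reasonable, but as you yourself say, that is where the real work lies, and it has not been done here.
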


Lastly, we should mention that while the main result in this manuscript, Theorem~\ref{main_thm}, is only stated for ${\rm SL}_n$, it is all but clear that the result will also generalize to all the other split Chevally groups.


\printbibliography


\end{document}